\theoremstyle{plain}
\newtheorem{thm}{Theorem}[section]
\newtheorem{proposition}[thm]{Proposition}
\newtheorem{lem}[thm]{Lemma}
\theoremstyle{remark}
\newtheorem{rem}[thm]{Remark}
\newtheorem{defin}[thm]{Definition}
\newtheorem{question}[thm]{Question}
\newtheorem{exa}[thm]{Example}
\newcommand{\R}{\ensuremath{\mathbb{R}}}
\newcommand{\N}{\ensuremath{\mathbb{N}}}
\newcommand{\Ll}{\ensuremath{\mathcal{L}}}
\newcommand{\HH}{\mathcal{H}}
\DeclareMathOperator{\diam}{diam}
\begin{document}

\title{Thin and fat sets for doubling measures in metric spaces}

\author{Tuomo Ojala}
\author{Tapio Rajala}
\author{Ville Suomala}

\address{Department of Mathematics and Statistics \\
         P.O. Box 35 (MaD) \\
         FI-40014 University of Jyv\"askyl\"a \\
         Finland}
\email{tuomo.j.ojala@jyu.fi}

\address{Department of Mathematical Sciences \\
         P.O. Box 3000 \\
         FI-90014 University of Oulu \\
         Finland}
\email{ville.suomala@oulu.fi}

\address{Scuola Normale Superiore\\
Piazza dei Cavalieri 7\\
I-56127 Pisa\\ Italy}
\email{tapio.rajala@sns.it}

\thanks{
T. Ojala was supported by the Vilho, Yrj\"o and Kalle V\"ais\"al\"a fund,
T. Rajala by the European Project ERC AdG *GeMeThNES* and
V. Suomala by the Academy of Finland project \#126976.
}
\subjclass[2000]{Primary 28A12. Secondary 30L10.}
\keywords{Doubling measure, thin set, thick set, fat set}
\date{\today}


\begin{abstract}
 We consider sets in uniformly perfect metric spaces which are null for every doubling measure
 of the space or which have positive measure for all doubling measures. These sets are called
 thin and fat, respectively. In our main results, we give sufficient
 conditions for certain cut-out sets being thin or fat.
\end{abstract}


\maketitle

\section{Introduction}
In this paper, we study the size of subsets of a metric space from the
point of view of doubling measures.
We say that a subset $A$ of a metric space $X$ is \emph{fat}
if it has positive measure for all doubling measures of $X$ and
\emph{thin}, if it has zero measure with respect to all the doubling
measures. Recall that a (Borel regular outer-) measure $\mu$ on a metric space $(X,d)$
is called \emph{doubling (with constant $C$)} if there exists a
constant $C \ge 1$ depending only on $\mu$ so that
\begin{equation*}
 0<\mu(B(x,2r)) \leq C \mu(B(x,r))<\infty,
\end{equation*}
for all $x\in X$ and $r>0$. By $B(x,r)$ we mean the closed ball centered at $x$ with radius $r$.
The notation $U(x,r)$ is used for the corresponding open ball.
We denote the collection of all
doubling measures on the
space $X$ by $\mathcal{D}(X)$. We recall the following closely related
concept: A metric space $X$ is \emph{a doubling metric space}, if
there is $N\in\N$ such that any ball of radius $r$ may be covered by
a collection of $N$ balls of radius $r/2$. It is easy to see by a
simple volume argument, that $ \mathcal{D}(X)\neq\emptyset$ implies
that $X$ is doubling. On the other hand, if the space is complete and
doubling, then it can be shown that $\mathcal{D}(X)\neq\emptyset$, and
typically the collection $\mathcal{D}(X)$ is rather rich. See \cite{VolbergKonyagin1987},
 \cite{LuukkainenSaksman1998}, \cite{Wu1998}, \cite{KaeSuoRaj}.

Our aim is to find geometric conditions for subsets of metric spaces
which guarantee that the set is fat or thin.
In order to obtain such results,
we need to require mild regularity from the space:
A metric space $X$ is called \emph{uniformly perfect (with constant $D$)},
if it is not a singleton, and if there exists a constant $D \geq 1$
such that
\[
X \setminus B(x,r) \neq \emptyset \Longrightarrow B(x,r) \setminus B(x,r/D) \neq \emptyset,
\]
for all $x\in X$ and $r>0$.

The main result of this paper, Theorem \ref{thm:gradulause}, deals with
so called \emph{cut-out sets} that are formed by removing a countable
collection of closed balls from a uniformly perfect metric space. The
result states that if the diameter of the removed balls decays
sufficiently fast, then the remaining cut-out set is always
either fat or thin (or both if there are no doubling measures supported by the metric space).
In order to formulate what we mean by fast enough, we employ for all $p>0$ the notation
\[
 \ell^p := \left\{(\alpha_n)_{n=1}^{\infty} ~:~ 0<\alpha_n<1 \text{ and }\sum_{n=1}^{\infty} \alpha_n^p < \infty\right\}.
\]
For $0<p_1 \le p_2$ , and $0<\alpha<1$,
we have $\alpha^{p_1} \ge \alpha^{p_2}$ and therefore
$\ell^{p_1} \subset \ell^{p_2}$.
With this in mind, we introduce the abbreviation
\[
 \ell^0 := \bigcap_{p>0} \ell^p.
\]

\begin{thm}\label{thm:gradulause}
 Let $(B_i)_{i=1}^\infty$ be a sequence of closed balls in a uniformly perfect metric space $(X,d)$ so that
 $(\diam(B_i))_{i=1}^\infty \in \ell^0$. Then the set
 \[
  E:=X \setminus \bigcup_{i=1}^\infty B_i
 \]
 is fat or thin.
\end{thm}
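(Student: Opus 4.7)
The plan is to establish the dichotomy via the contrapositive \emph{not thin $\Rightarrow$ fat}: supposing that some doubling measure $\mu$ satisfies $\mu(E)>0$, I shall argue that any other doubling measure $\nu$ also gives $\nu(E)>0$. First I would apply the Lebesgue differentiation theorem for doubling measures (available whenever the metric space carries one) to find a $\mu$-density point $x_0 \in E$, so that for every $\eta>0$ some radius $r_0>0$ yields
\[
 \mu(B(x_0,r_0)\setminus E) \leq \eta\,\mu(B(x_0,r_0)).
\]
Meanwhile, $(\diam B_i)\in \ell^0$ gives $\sum_i \diam(B_i)^p<\infty$ at every $p>0$, so at each scale only finitely many $B_i$ exceed that diameter, and the tails of each $p$-series are arbitrarily small.

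The core step is a recursive geometric construction of nested sub-balls $B(x_0,r_0)\supset B(x_1,r_1)\supset \cdots$ with $r_k\to 0$ such that $B(x_k,r_k)$ is disjoint from every $B_i$ of diameter at least $r_k$. At each stage, the finitely many offending $B_i$ have small total $\mu$-mass inside $B(x_{k-1},r_{k-1})$ (by the density inequality), and hence, via doubling of $\mu$, occupy a geometrically small portion; uniform perfectness with constant $D$ then provides a sub-ball $B(x_k,r_k)$ of comparable radius avoiding them. The construction is purely geometric, so each $B(x_k,r_k)$ carries positive $\nu$-measure by doubling of $\nu$.

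For the conclusion, let $\nu$ be an arbitrary doubling measure with doubling constant $C_\nu$, and set $p_\nu = \log_2 C_\nu$. The $\ell^0$ bound $\sum_i \diam(B_i)^{p_\nu}<\infty$ combined with the doubling of $\nu$ yields
\[
 \nu\Big(\bigcup_{\diam B_i < r_k} B_i \cap B(x_k,r_k)\Big) \leq \tfrac{1}{2}\,\nu(B(x_k,r_k))
\]
for $k$ sufficiently large. Together with the disjointness from the $B_i$ of diameter $\geq r_k$, this forces $\nu(E\cap B(x_k,r_k))\geq \tfrac{1}{2}\nu(B(x_k,r_k))>0$, whence $\nu(E)>0$.

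The main obstacle I anticipate is ensuring that the geometric construction, which must be performed without reference to any particular $\nu$, yields sub-balls that work for \emph{every} doubling measure simultaneously, since the exponents $p_\nu$ can be arbitrarily large. This is precisely why the hypothesis $(\diam B_i)\in \ell^0$ (as opposed to a fixed $\ell^p$) is essential: it provides tail summability at every exponent at once, so the final estimate holds uniformly in $\nu$. The delicate part is the bookkeeping between the rate $r_k\to 0$, the numbers $N_k$ of large $B_i$'s to avoid, and the geometric room provided by uniform perfectness — all three being tied together through the $\ell^0$ hypothesis.
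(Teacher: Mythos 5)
Your overall strategy is on the right track and matches the paper's skeleton: fix $\mu$ with $\mu(E)>0$, locate a ball $G_N$ inside $X\setminus\bigcup_{i=1}^N B_i$ with quantitatively controlled size, and then for an arbitrary $\nu\in\mathcal{D}(X)$ estimate
\[
\nu(E)\ge\nu(G_N)-\sum_{m>N}\nu(B_m),
\]
beating the tail using $\ell^0$-summability. You have also correctly identified that the difficulty is making the ``good ball'' construction independent of $\nu$. However, the proposal has a genuine gap in exactly that step, and the gap is not a matter of bookkeeping — it is the main content of the theorem.

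The gap is that your recursive construction does not produce the quantitative lower bound the final estimate requires. To make the conclusion work for an arbitrary $\nu$, it is not enough that a sub-ball $B(x_k,r_k)$ exists disjoint from all $B_i$ of diameter $\ge r_k$: you need a \emph{polynomial} lower bound of the form $r_k\gtrsim N_k^{-R}$, where $N_k$ is the number of $B_i$ being avoided and $R$ depends only on $\mu$ (this is what the paper's Lemma~\ref{le:suurinpallojaljella} supplies: a ball $G_N$ with $\diam(G_N)\ge N^{-R}$). With such a bound, Lemma~\ref{le:seuraus} gives $\nu(G_N)\ge\lambda N^{-Rs}$, which decays only polynomially in $N$, while the tail $\sum_{m>N}\diam(B_m)^{t}\le c_p^{t/p}\sum_{m>N}m^{-t/p}$ can be made to decay faster than any polynomial by choosing $p$ small (using $\ell^0$). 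If instead the recursion drives $r_k$ down too fast relative to $N_k$ — and nothing in ``the offending balls occupy a geometrically small portion; uniform perfectness provides a sub-ball of comparable radius'' prevents exponential shrinking as you go deeper — then $\nu(B(x_k,r_k))$ can decay faster than the tail and the inequality $\nu(\bigcup_{\diam B_i<r_k} B_i\cap B(x_k,r_k))\le\frac12\nu(B(x_k,r_k))$ simply fails for some $\nu$. Your final paragraph acknowledges this ``bookkeeping'' as the delicate part, but it is precisely where the theorem's content lives and where a concrete estimate is missing.

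Two smaller remarks. First, the density-point device is not needed: the paper works directly from $\mu(E)>0$ and gets the polynomial bound by a different route — Lemma~\ref{le:pullistus} shows that the $2N^{-Q}$-neighborhood of $\bigcup_{i\le N}B_i$ still misses a set of $\mu$-measure $\ge\epsilon/2$, and then a maximal-packing/Vitali-type argument at scale $N^{-Q}$ produces the ball $G_N$. This is a cleaner way to extract the required polynomial rate than iterating a density estimate, since it exploits $\ell^0$ at a single scale rather than recursively. Second, the closedness of the $B_i$ is used essentially in Lemma~\ref{le:pullistus} (the neighborhood estimate fails for open balls, as Example~\ref{ex:openfails} shows), and your sketch does not make visible where closedness would enter, which is a sign that the quantitative core is missing. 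If you replace the recursive construction by the neighborhood/packing argument — or otherwise supply a proof that $r_k\gtrsim N_k^{-R}$ with $R=R(\mu)$ — the rest of your outline goes through.
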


We will prove Theorem \ref{thm:gradulause} in Section \ref{section:mainthm}.
It is important that the removed balls are closed. In fact, Theorem
\ref{thm:gradulause} fails for open balls. This will be seen in
Example \ref{ex:openfails}. The result also fails, if we remove the
assumption on uniform perfectness, see Example \ref{ex:nuperf}.

A slightly weaker version of Theorem \ref{thm:gradulause} was proved by
Staples and Ward, \cite[Theorem 1.2]{StaplesWard}, in the case
$X=\R$. They proved that the cut-out set $E$ is fat if it has positive
Lebesgue measure and if
$(\diam(B_i))\in\ell^0$. Staples and Ward formulated their result using
quasisymmetric functions. Thin and fat sets on the real line were also
discussed in \cite{Wu} and \cite{BuHaMac},
again in terms of quasisymmetric functions.

Let us next review the connection between doubling measures and
quasisymmetric mappings. Recall first that a homeomorphism $f$
between two metric spaces $(X,d_X)$ and $(Y,d_Y)$ is \emph{quasisymmetric} if there exists a homeomorphism 
$\eta \colon [0,\infty) \to [0,\infty)$ so that for any three distinct points $x,y,z \in X$ we have
\[
 \frac{d_Y(f(x),f(y))}{d_Y(f(x),f(z))} \le \eta\left(\frac{d_X(x,y)}{d_X(x,z)}\right).
\]
If $\mu\in\mathcal{D}(\R)$, then a function $f$ satisfying for all
$x\in\R$ the identity
\[
 f(x)-f(0) = \int_0^x d\mu
\]
is quasisymmetric. Conversely, for a quasisymmetric function
$f\colon\R\rightarrow\R$, the measure $\mu$ defined via
$\mu([a,b]) = f(b)-f(a)$ (or $f(a)-f(b)$ if $f$ is decreasing), for $a<b$, is doubling.

In the setting of uniformly perfect metric spaces, we follow the terminology introduced
by Heinonen in \cite{Heinonen} and call a set $E \subset X$ \emph{quasisymmetrically thick},
if $\HH^q(f(E))>0$ whenever $f\colon X \to Y$ is a quasisymmetric homeomorphism from $X$
onto an Ahlfors $q$-regular space $Y$.
We can also define
\emph{quasisymmetrically null} sets as those $E\subset X$ for which
$\HH^q(f(E))=0$ whenever $f\colon X \to Y$ is a quasisymmetric homeomorphism from $X$
onto an Ahlfors $q$-regular space $Y$.

In both of the previous definitions $q$ is allowed to depend on the space $Y$.

By examining the proof of \cite[Proposition 14.41]{Heinonen} we notice an equivalence between
the quasisymmetrically thick/null sets and fat/thin sets in uniformly perfect spaces.
We include the proof of this fact here. Notice that we do not
need to assume the space $(X,d)$ to be complete.

\begin{proposition}\label{pro:heinonen1441}
  In a uniformly perfect metric space $(X,d)$ a set $E$ is fat (thin) if and only if it is quasisymmetrically thick (null).
  \end{proposition}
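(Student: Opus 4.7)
The plan is to prove both equivalences (fat $\Leftrightarrow$ quasisymmetrically thick, thin $\Leftrightarrow$ quasisymmetrically null) by transferring doubling measures across quasisymmetric homeomorphisms in both directions. One direction in each pair is a quick pull-back, while the converse relies on constructing, from a given doubling measure on $X$, an Ahlfors regular metric that is quasisymmetrically equivalent to $d$.

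For the easy direction, suppose $f \colon (X,d) \to (Y,d_Y)$ is a quasisymmetric homeomorphism onto an Ahlfors $q$-regular space. The Hausdorff measure $\HH^q|_Y$ is doubling on $Y$, and the definition of quasisymmetry forces $f$ to send each ball $B(x,r) \subset X$ between two balls in $Y$ of comparable radii centred at $f(x)$. Hence $\mu_f(A) := \HH^q(f(A))$ defines a doubling measure on $X$, i.e.\ $\mu_f \in \mathcal{D}(X)$. If $E$ is fat then $\HH^q(f(E)) = \mu_f(E) > 0$, and if $E$ is thin then $\HH^q(f(E)) = \mu_f(E) = 0$; since $f$ was arbitrary, this yields quasisymmetric thickness or nullity of $E$, respectively.

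For the converse, fix an arbitrary $\mu \in \mathcal{D}(X)$. Following the argument in the proof of \cite[Proposition 14.41]{Heinonen}, one constructs, on the same underlying set $X$, a new metric $\hat d$ with the following properties: (i) the identity $\iota \colon (X,d) \to (X,\hat d)$ is a quasisymmetric homeomorphism, (ii) $(X,\hat d)$ is Ahlfors $Q$-regular for some $Q > 0$ depending on $\mu$ and the structure constants of $X$, and (iii) $\mu$ and $\HH^Q_{\hat d}$ are comparable on Borel sets. The metric $\hat d$ is obtained from the quasi-distance $(x,y) \mapsto \mu(B(x,d(x,y)))^{1/Q}$ for a suitable exponent $Q$, upgraded to a genuine metric by a Maci\'as--Segovia type lemma with only a bi-H\"older distortion. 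With such $\hat d$, applying quasisymmetric thickness or nullity of $E$ to $\iota$ gives $\HH^Q_{\hat d}(E) > 0$ or $= 0$ respectively, and by (iii) the same holds for $\mu$; since $\mu$ was arbitrary, $E$ is fat or thin. No completeness assumption on $X$ is needed because $\iota$ is a bijection between two metrics on the same underlying set.

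The main obstacle is the construction in the converse direction: choosing the exponent $Q$ so that the quasi-distance above is actually a quasi-metric, verifying the quasisymmetry of $\iota$, and proving the Ahlfors lower bound. Uniform perfectness of $(X,d)$ enters precisely at this point, since it guarantees that $\mu$-masses of concentric balls decrease strictly with the radius at small scales, which is required both to prevent the quasi-distance from degenerating and to obtain the lower regularity estimate for $\HH^Q_{\hat d}$.
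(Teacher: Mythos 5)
Your proposal is correct and follows essentially the same route as the paper: one direction by pulling back $\HH^q$ through a quasisymmetric homeomorphism to produce a doubling measure on $X$, and the converse by invoking Heinonen's construction (the paper cites Proposition 14.14, whose content is the $\hat d$ you describe) of a quasisymmetrically equivalent Ahlfors regular metric comparable to a given doubling measure. The only cosmetic difference is that you unpack Heinonen's construction while the paper cites it as a black box, and you package the target space as $(X,\hat d)$ with the identity map rather than an abstract $(Y,f)$; the mathematics is the same, including the observation that completeness is not needed.
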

  \begin{proof}

For any metric space $Y$ and all $\mu\in\mathcal{D}(Y)$, it follows that if $f\colon X\rightarrow Y$ is a quasisymmetric homeomorphism, then the pullback measure $\nu(E)=\mu(f(E))$ is a doubling measure on $X$.

Indeed, if $x\in X$, $0<r<\diam(X)$ and $z\in B(x,2 r)\setminus B(x,r)$, then it follows from the definition of quasisymmetry that
\begin{align*}
&B_Y\left(f(x),d_Y(f(x),f(z))/\eta(2)\right)\subset f\left(B_X(x,r)\right),\\
&f\left(B_X(x,2r)\right)\subset B_Y\left(f(x),\eta(2)d_Y(f(x),f(z))\right).
\end{align*}
Thus $\nu$ is doubling with doubling constant $C^{2\log_2(\eta(2))+1}$, where $C$ is the doubling constant of $\mu$.

Applying this observation when $Y$ is $q$-regular and $\mu=\HH^q$ implies that $\HH^q(f(E))>0$ whenever $E\subset X$ is fat and $\HH^q(f(E))=0$ whenever $E$ is thin. Thus fat sets are quasisymmettrically thick and thin sets are quasisymmetrically null.

Let $\mu$ be a doubling measure on $X$. Then it follows from \cite[Proposition 14.14]{Heinonen} that there is $0<q<\infty$, a metric space $Y$, and a quasisymmetric homeomorphism $f\colon X\rightarrow Y$ such that $\nu(f(E))=\mu(E)$ is Ahlfors $q$-regular (and thus comparable to $\HH^q$) on $Y$. Thus, if $\HH^q(f(E))>0$ then $\mu(E)>0$ and if $\HH^q(f(E))=0$ then $\mu(E)=0$. Applying this for all $\mu\in\mathcal{D}(X)$ implies that quasisymmetrically thick sets are fat and that quasisymmetrically null sets are thin.
\end{proof}

In light of the Proposition \ref{pro:heinonen1441},
our results can be viewed as a step towards understanding
the analogs of the one
dimensional results on quasisymmetrically thick sets in metric spaces. The question as
to what extent such analogs can hold was raised by Heinonen in
\cite[14.42]{Heinonen} and \cite[Open problem 1.18]{Heinonen2003}. See
also \cite[Chapter 16]{DavidSemmes1997}.

The paper is organized as follows.
Section \ref{section:preli} contains some basic facts concerning
doubling measures, uniform perfectness, and fat and thin sets. In
Section \ref{section:thick}, we give a sufficient condition for a
Cantor type set being fat. Namely, we prove that an $(\alpha_n)$-thick
set on a uniformly perfect metric space is fat if $(\alpha_n)_{n=1}^\infty\in\ell^0$.
Theorem
\ref{thm:gradulause} is proved in Section \ref{section:mainthm}. At the end, we present
a couple of examples and open questions related to our results.

\section{Preliminaries}\label{section:preli}

Let us begin with some notation. If there is no danger of
misunderstanding, we obey the notational convention that $C$ denotes
a doubling constant of a measure and $D$ is a uniform perfectness
constant of a metric space in question. Also, whenever we talk about a
ball $B$, it is to be understood that
the center and radius have been fixed (in general these might not be
uniquely determined just by the set $B$).
We also write $c B = B(x,c r)$ for $c > 0$.
By a maximal packing of balls of radius $r>0$ in a set $A$, we mean a collection of disjoint balls
\begin{equation*}
 \mathcal{B} = \{B(x,r) : x \in A\}
\end{equation*}
such that for any $y \in A$ there exists $B \in \mathcal{B}$, for which $B \cap B(y,r) \neq \emptyset$.
For any $\zeta > 0$ and $S \subset X$, the open $\zeta$-neighborhood of the set $S$ is denoted by
\begin{equation*}
 S(\zeta)=\{x \in X:d(x,S)<\zeta\}.
\end{equation*}

We recall a few easy facts about doubling measures and uniformly
perfect metric spaces. These can be found for example in \cite[4.16,
13.1]{Heinonen}.
The first estimate is a direct consequence of the doubling condition:
Let $(X,d)$ be a metric space and $\mu\in\mathcal{D}(X)$.
Then, for all bounded $A\subset X$ with $\mu(A)>0$, $x\in A$ and $0<r < \diam(A)$, we have
\begin{equation}\label{eqn:lemH31}
 \frac{\mu(B(x,r))}{\mu(A)} \ge 2^{-s} \left(\frac{r}{\diam (A)}  \right)^{s},
\end{equation}
where $s = \log_2 C > 0$.
The second estimate is for $\mu\in\mathcal{D}(X)$, when $X$ is uniformly perfect:
There exist constants $\Lambda \ge 1$ and $t > 0$, so that
\begin{equation}\label{eqn:tokajuttu}
 \frac{\mu(B(x,r))}{\mu(B(x,R))} \le \Lambda \left( \frac{r}{R} \right)^{t},
\end{equation}
for all $x \in X$ and $0 < r \le R < \diam(X)$.

Combining the previous two estimates we have the following.

\begin{lem}\label{le:seuraus}
  If $X$ is a bounded uniformly perfect metric space and
  $\mu\in\mathcal{D}(X)$, then there exist constants $0<\lambda,t,\Lambda,s<\infty$
  depending on $\mu$,
 so that
 \begin{equation}\label{eqn:lemmaseurausgeq}
  \lambda r^{s} \leq \mu(B(x,r)) \leq\Lambda r^{t},
 \end{equation}
 for all $x \in X$ and $0<r< \diam(X)$.
\end{lem}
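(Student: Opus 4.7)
The plan is to show that both inequalities follow almost directly by applying the two displayed inequalities \eqref{eqn:lemH31} and \eqref{eqn:tokajuttu} with suitable choices of reference set and reference radius, after first observing that $\mu(X)$ is finite and positive.

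First I would verify that $0 < \mu(X) < \infty$. Positivity is immediate from $\mu(B(x,r)) > 0$ for any single ball. For finiteness I would use boundedness of $X$: fixing any $x_0 \in X$, we have $X \subset B(x_0, \diam(X))$, so $\mu(X) \le \mu(B(x_0,\diam(X))) < \infty$ by the doubling condition (applied finitely many times to push a finite-radius ball out past $\diam(X)$).

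For the lower bound I would simply apply \eqref{eqn:lemH31} to the choice $A = X$. This gives, for every $x \in X$ and $0 < r < \diam(X)$, the estimate
\[
\mu(B(x,r)) \ge 2^{-s}\mu(X) \diam(X)^{-s} r^s = \lambda r^s,
\]
where $s = \log_2 C$ and $\lambda := 2^{-s} \mu(X)\diam(X)^{-s} > 0$ depends only on $\mu$ and $X$.

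For the upper bound I would fix once and for all some reference radius $R_0 \in (0,\diam(X))$ and apply \eqref{eqn:tokajuttu} with $R = R_0$. For $0 < r \le R_0$ this yields
\[
\mu(B(x,r)) \le \Lambda \mu(B(x,R_0)) (r/R_0)^t \le \Lambda \mu(X) R_0^{-t}\, r^t.
\]
For the remaining range $R_0 < r < \diam(X)$ one has $\mu(B(x,r)) \le \mu(X) \le \mu(X) R_0^{-t} r^t$ since $r/R_0 \ge 1$ and $t > 0$. Combining the two cases and absorbing $\mu(X)$ and $R_0^{-t}$ into a single constant gives $\mu(B(x,r)) \le \Lambda' r^t$ with $\Lambda' < \infty$ depending only on $\mu$. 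Relabeling $\Lambda'$ as $\Lambda$ (so the $\Lambda$ and $t$ in the conclusion are not literally the same as in \eqref{eqn:tokajuttu}, but of the same nature) completes the proof. There is no real obstacle here; the only point that needs a moment's care is the finiteness of $\mu(X)$, which is where the boundedness hypothesis on $X$ is used.
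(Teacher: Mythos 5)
Your proof is correct and is exactly the route the paper intends: the paper gives no written proof beyond the remark ``Combining the previous two estimates we have the following,'' and your argument is precisely the natural way to carry out that combination — apply \eqref{eqn:lemH31} with $A=X$ for the lower bound, and \eqref{eqn:tokajuttu} with a fixed reference radius $R_0$ (plus the trivial estimate $\mu(B(x,r))\le\mu(X)$ for $r>R_0$) for the upper bound. The only tiny inefficiency is in your justification of $\mu(X)<\infty$: since the paper's definition of a doubling measure already requires $\mu(B(x,r))<\infty$ for every $x$ and $r$, you can simply cite $X\subset B(x_0,\diam X)$ without needing to ``push a finite-radius ball out'' by iterating the doubling inequality.
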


Our next proposition allows us to restrict our considerations to the
case when the space is bounded. We denote by $\mu|_A$ the restriction
of a measure $\mu$ onto a measurable set $A$ and by $\overline{A}$ the
closure of a set $A$.

\begin{proposition}\label{pro:restricttobounded}
 Let $(X,d)$ be a
 metric space
 and let $K\subset X$ be bounded. Then
 there exists a
 bounded closed set $A \subset X$ so that $K\subset A$
 and $\mu|_A \in \mathcal{D}(A)$
 for every $\mu \in \mathcal{D}(X)$.
 Moreover, if the space $(X,d)$ is uniformly perfect, so is the space $(A,d|_{A\times A})$. 
\end{proposition}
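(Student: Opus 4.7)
The plan is to choose $A$ as a sufficiently large closed ball containing $K$ and verify the required properties. Fix $x_0 \in K$ and set $A = \overline{B(x_0, R)}$ for some $R$ large compared to $\diam(K)$, say $R = 4\diam(K) + 1$. Then $A$ is closed and bounded with $K \subset A$ by construction, and any doubling measure $\mu \in \mathcal{D}(X)$ is locally finite, so $\mu(A) \le \mu(B(x_0, 2R)) < \infty$.

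Next I would verify that $\mu|_A$ is doubling on $(A, d|_{A \times A})$. Writing $B_A(x, r) = B(x, r) \cap A$ for $x \in A$ and $r > 0$, the strategy is to compare the $\mu$-measure of $B_A$-balls with that of $B$-balls in $X$. When $B(x, 2r) \subset A$, the doubling of $\mu|_A$ follows immediately from the doubling of $\mu$. Otherwise, I would combine the upper bound $\mu(B_A(x, 2r)) \le \mu(B(x, 2r)) \le C \mu(B(x, r))$ with a lower bound $\mu(B_A(x, r)) \ge c \mu(B(x, r))$ for some $c = c(\mu, R) > 0$. Such a lower bound exploits the closed-ball structure of $A$: if $d(x, x_0) \le r$ then $B(x_0, r - d(x, x_0)) \subset B_A(x, r)$, giving the bound directly; the remaining case $d(x, x_0) > r$ is handled by iterating the doubling of $\mu$ along a chain of balls of comparable radius from $x$ back towards $x_0$. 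Positivity of $\mu(B_A(x, r))$ follows from the same comparison.

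For the moreover part, suppose $X$ is uniformly perfect with constant $D$. To establish uniform perfectness of $A$, I would take $x \in A$ and $r > 0$ with $A \setminus B(x, r) \neq \emptyset$; then $X \setminus B(x, r) \neq \emptyset$, and uniform perfectness of $X$ provides $z \in X$ with $r/D < d(x, z) \le r$. If $z \in A$, we are done; otherwise I would iteratively apply uniform perfectness of $X$ at various scales and centers (using the point of $A \setminus B(x, r)$ together with $x_0$), choosing $R$ sufficiently large that the resulting chain of intermediate points remains inside $A = \overline{B(x_0, R)}$, thereby producing a point of $A$ at the required distance from $x$.

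The main obstacle will be establishing the lower bound $\mu(B_A(x, r)) \ge c \mu(B(x, r))$ in the case $d(x, x_0) > r$, where one cannot directly use the containment of a small ball centered at $x_0$. This requires a careful chaining argument, applying the doubling of $\mu$ along a sequence of balls whose centers approach $x$ from the interior of $A$; a parallel difficulty arises in the uniform perfectness part and is resolved by chaining via uniform perfectness of $X$.
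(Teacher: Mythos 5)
Your approach of taking $A$ to be a closed ball $\overline{B(x_0,R)}$ does not work in a general metric space, and this is precisely the point the paper's construction is designed to handle. The key property that makes $\mu|_A$ doubling is that $A$ must be ``thick'' at every scale near every one of its points: for each $x\in A$ and small $r$, one needs a ball $B(y,cr)\subset A$ with $y$ within distance $\sim r$ of $x$. A closed ball in a general metric space simply need not have this property near the sphere $\{d(\cdot,x_0)=R\}$, and your chaining argument for the case $d(x,x_0)>r$ has no mechanism for producing such a $y$: there may be no point of $X$ at all in $B(x,r/2)\cap B(x_0,R-r/4)$, since metric spaces need not be geodesic.

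A concrete counterexample: let $X=\{-1\}\cup\{0\}\cup\{1/n:n\in\N\}\subset\R$ with the Euclidean metric. This is a compact doubling metric space, so $\mathcal{D}(X)\neq\emptyset$. Take $K=\{-1\}$, $x_0=-1$; then your $R=4\diam(K)+1=1$ gives $A=\overline{B(-1,1)}=\{-1,0\}$. For every $\mu\in\mathcal{D}(X)$ the point $0$ is not isolated in $X$, so $\mu(\{0\})=0$ (a doubling measure cannot have an atom at a non-isolated point), whence $\mu|_A(B(0,1/2))=0$ and $\mu|_A\notin\mathcal{D}(A)$. The same phenomenon occurs in uniformly perfect spaces: in the middle-thirds Cantor set $C$, taking $x_0=0$ and $R=2/3$ gives $A=\overline{B(0,2/3)}=( [0,1/3]\cap C)\cup\{2/3\}$, so $2/3$ is isolated in $A$ but not in $C$, again killing both the doubling of $\mu|_A$ and the uniform perfectness of $A$. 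One can try to dodge this for special $R$, but your proof gives no general recipe and, more fundamentally, no lower bound for $\mu(B_A(x,r))$ without some geometric structure that closed balls simply lack.

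The paper avoids all this by \emph{building} $A$ so that thickness at every scale is automatic: starting from $A_{k_0}=K$ and setting $A_k=\bigcup_{x\in A_{k-1}}B(x,2^{-k})$, then $A=\overline{\bigcup_k A_k}$. This guarantees that for every $x\in A$ and every $k$ there is $y\in B(x,2^{-k-1})$ with $B(y,2^{-k-3})\subset A$, which is exactly what the doubling estimate and the uniform-perfectness verification use. Your proof would need some analogue of this construction; a closed ball does not supply it.
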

\begin{proof}
Choose $k_0\in\mathbb{Z}$ such that
$2^{-k_0-1}\le\diam(K)<2^{-k_0}$. Define $A_{k_0}=K$ and
by recursion
 \[
  A_k = \bigcup_{x \in A_{k-1}} B(x,2^{-k})
 \]
 for all integers $k > k_0$. With these sets, we define
 \[
  A = \overline{\bigcup_{i=k_0}^\infty A_i}.
 \]
 Let $\mu \in \mathcal{D}(X)$, $x \in A$ and $r>0$. Because $\diam(A)
 < 2^{-k_0+2}$, we may assume that $r < 2^{-k_0+2}$. Let $k \in
 \mathbb{Z}$
 be such that $2^{-k} \le r < 2^{-k+1}$.
 Now $d(x,A_{k+2})\le 2^{-k-2}$ so there exists some $y \in A_{k+2} \cap B(x,2^{-k-1})$.
 Using the fact that
 \[
  B(y,2^{-k-3}) \subset A \cap B(x,2^{-k}),
 \]
 we can estimate
 \begin{align*}
  \mu|_A(U(x,2r)) & \le \mu(U(x,2^{-k+2})) \le \mu(U(y,2^{-k+3})) \le C^6 \mu(U(y,2^{-k-3})) \\
                  & \le C^6 \mu|_A(U(x,2^{-k})) \le C^6 \mu|_A(U(x,r)),
 \end{align*}
 where $C$ is the doubling constant of $\mu$. This estimate shows that
 $u|_A \in \mathcal{D}(A)$ since it does not matter in the definition
 of doubling measures whether we use open or closed balls.

 Let us then assume that $(X,d)$ is uniformly perfect with a constant $D$.
 We need to show that the set $A$ as a metric space equipped with the original metric is also uniformly perfect with some constant.
 Take $x \in X$ and $r > 0$. Let $k$ and $y$ be chosen as above.
  If $d(x,A_{k+2})>2^{-k-3}$ it follows that
 \[
  y \in A_{k+2} \cap B(x,r) \setminus B(x,2^{-k-3}) \subset A \cap B(x,r) \setminus B(x,\tfrac{r}{16}).
 \]
 On the other hand, if $d(x,A_{k+2}) \leq 2^{-k-3}$ it follows that $x \in A_{k+3}$ which implies that $B(x,2^{-k-4}) \subset A_{k+4}$ and so $B(x,\tfrac{r}{32}) \subset A$.
 Thus $A$ is uniformly perfect with a constant $32 D$.
 
\end{proof}

One may wonder how rich are the families of fat and thin sets in
general. It is clear that sets with nonempty interior are always fat
and that countable sets not including isolated points of the space are thin. A priori it is
not clear whether these could be the only fat (resp. thin) sets of the
metric space. Also, from the point of view of Theorem
\ref{thm:gradulause} it is reasonable to ask whether there are sets
that are not fat nor thin.

\begin{proposition}\label{prop:existence}
Let $X$ be a complete and doubling metric space such that the set of isolated
points of $X$ is not dense in $X$. Then there is a fat set $E\subset X$
which is nowhere dense and a Cantor set (i.e. an uncountable perfect set
without isolated points) $F\subset X$ which is thin. Also, there is a
compact $G\subset X$ which is not fat nor thin.
\end{proposition}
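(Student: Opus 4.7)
My plan is to reduce the problem to a bounded closed subspace of $X$ in which Theorem~\ref{thm:gradulause} and Lemma~\ref{le:seuraus} are applicable, and then exhibit each of $E$, $F$, and $G$ by an explicit Cantor-type construction.

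For the reduction, pick $x_0\in X$ and $r_0>0$ so that $\overline{B(x_0,r_0)}$ contains no isolated points of $X$, which is possible by hypothesis. Using completeness and non-isolation at every scale, inductively build a compact set $K\subset\overline{B(x_0,r_0)}$ as follows: at level $n$, select pairwise disjoint closed balls of radius $r_n=\lambda^n r_0$ (for a fixed small $\lambda$) so that each level-$n$ ball contains two disjoint level-$(n+1)$ sub-balls. The set $K=\bigcap_n\bigcup_i B(x_{n,i},r_n)$ is compact and uniformly perfect as a subspace of $X$. Apply Proposition~\ref{pro:restricttobounded} to $K$ to obtain a bounded closed $A\supset K$ in $X$ with $\mu|_A\in\mathcal{D}(A)$ for every $\mu\in\mathcal{D}(X)$. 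A careful reread of the proof of Proposition~\ref{pro:restricttobounded} shows that $A$ inherits uniform perfectness from the seed $K$: in the second case of the UP analysis the role of UP of $X$ can be played by UP of $K$ at a nearby point of $K\subset A$. Lemma~\ref{le:seuraus} then gives $\mu(B(x,r))\le\Lambda r^t$ on $A$ for every $\mu\in\mathcal{D}(A)$; in particular every such $\mu$ is atomless.

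Fix $\mu_0\in\mathcal{D}(A)$ and a countable dense sequence $\{y_i\}\subset A$. For each $i$, use atomlessness of $\mu_0$ to pick $\rho_i\in(0,2^{-i^2})$ with $\mu_0(B(y_i,\rho_i))<2^{-i-1}\mu_0(A)$, and set $E:=A\setminus\bigcup_{i=1}^\infty B(y_i,\rho_i)$. Then $(\diam B(y_i,\rho_i))_i\in\ell^0$, the open set $\bigcup_i U(y_i,\rho_i)$ is dense in $A$ (so $E$ is nowhere dense in $X$), and $\mu_0(E)>\mu_0(A)/2>0$. By Theorem~\ref{thm:gradulause} applied in the uniformly perfect space $A$, the set $E$ is fat or thin in $A$; non-thinness (witnessed by $\mu_0$) forces fat, and fatness in $A$ passes to $X$ since every $\mu\in\mathcal{D}(X)$ restricts to an element of $\mathcal{D}(A)$. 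For the thin Cantor set $F$, perform a binary Cantor construction inside $K$: at level $n$ take $2^n$ pairwise disjoint closed sub-balls of radius $s_n:=2^{-n^2}$ and set $F=\bigcap_n F_n$. Then $F$ is uncountable, perfect, and compact; for any $\mu\in\mathcal{D}(X)$, Lemma~\ref{le:seuraus} applied to $\mu|_A$ gives
\[
  \mu(F)\le\mu(F_n)\le 2^n\Lambda_\mu s_n^{t_\mu}=\Lambda_\mu\,2^{\,n-t_\mu n^2}\to 0\text{ as }n\to\infty,
\]
so $F$ is thin.

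For $G$, produce two mutually singular measures $\nu_0,\nu_1\in\mathcal{D}(A)$, using the richness of $\mathcal{D}$ on complete doubling uniformly perfect spaces (see \cite{LuukkainenSaksman1998}, \cite{Wu1998}, \cite{KaeSuoRaj}). A Hahn-type decomposition produces a Borel $A_0\subset A$ with $\nu_1(A_0)=0$ and $\nu_0(A_0)=\nu_0(A)>0$; inner regularity of $\nu_0$ then yields a compact $G\subset A_0$ with $\nu_0(G)>0$. Thus $\nu_0(G)>0$ (so $G$ is not thin) while $\nu_1(G)=0$ (so $G$ is not fat). The main technical obstacle is the preliminary reduction: verifying that the set $A$ furnished by Proposition~\ref{pro:restricttobounded} is uniformly perfect when $X$ itself need not be. This requires revisiting the UP argument in the proof of that proposition and substituting uniform perfectness of the seed $K$ for that of $X$ in the case $d(x,A_{k+2})\le 2^{-k-3}$, so that a point of $A\cap B(x,r)\setminus B(x,r/D')$ can still be located, now via UP of $K$ applied at a point of $K$ near $x$.
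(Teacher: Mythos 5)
Your argument hinges on the claim that the set $A$ produced by Proposition~\ref{pro:restricttobounded} from a uniformly perfect seed $K$ is itself uniformly perfect, even when $X$ is not. This is a genuine gap, and I do not believe the claimed fix works. Revisit the second case of the UP analysis in the proof of Proposition~\ref{pro:restricttobounded}: there one has $x\in A_{k+3}$ and $B(x,r/32)\subset A$, and one must locate a point of $A$ in $B(x,r)\setminus B(x,r/D')$. The paper obtains such a point by applying uniform perfectness of $X$ at $(x,r/32)$; what is needed is literally a point of $X$ (hence of $A$, since the whole ball $B(x,r/32)$ lies in $A$) at distance comparable to $r$ from $x$. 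If $X$ has a ``gap'' at scale $r$ around $x$, no such point exists. Since the set $A$ extends up to distance $\sim 2^{-k_0}\sim\diam(K)$ from $K$, a point $x\in A$ can satisfy $d(x,K)\gg r$ for small $r$, so uniform perfectness of $K$ at a ``nearby'' point $z\in K$ gives no information at scale $r$ near $x$. Your proof sketch acknowledges that a substitution is needed but does not supply the required annulus point, and in a non-UP ambient space it generally cannot be supplied. A related and prior difficulty: the construction of the seed $K$ itself, selecting at each level two disjoint sub-balls of radius $\lambda^n r_0$ for a \emph{fixed} $\lambda$, is a uniform-perfectness-type requirement and can fail in a complete doubling space without isolated points (e.g., in Cantor-type spaces where the ratio of consecutive ball sizes tends to $0$, as in Example~\ref{ex:nuperf}).

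Because the argument for $E$ invokes Theorem~\ref{thm:gradulause} on $A$ and the argument for $F$ invokes the upper bound $\mu(B(x,s))\le\Lambda s^{t}$ from Lemma~\ref{le:seuraus} on $A$, both require $A$ to be uniformly perfect and so both inherit this gap. The paper's own proof takes a route that avoids uniform perfectness entirely: for $E$ it invokes Saksman's theorem that every nonempty metric space without isolated points contains a dense open set carrying no doubling measures, and argues by contradiction through restriction of doubling measures to $A$ and then to that open set; for $F$ it uses only the one-sided doubling estimate \eqref{eqn:lemH31} (a lower bound needing no uniform perfectness) to control the relative measure of removed pieces in a nested-cube construction. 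Your treatment of $G$ (two mutually singular doubling measures plus a Hahn decomposition and inner regularity) matches the paper's in spirit, though here too your construction is carried out inside the subspace $A$ and quietly presumes richness results that the paper's references state under completeness and doubling rather than uniform perfectness; that part is more a matter of citation hygiene than a flaw. The substantive problem is the unproved uniform perfectness of $A$.
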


\begin{proof}

To prove the existence of $E$, we apply a theorem by Saksman from \cite{Saks} that provides a link between nowhere dense fat sets and the existence of doubling measures (see also Remark \ref{remarks:sec3}).

By our assumption on the space there exists a point $x_0 \in X$ and a radius $r>0$
so that the open ball $U(x_0,r)$ does not contain any isolated points.
Consider $K=U(x,r/6)$ and let $A$ be the set constructed in (the proof of) Proposition \ref{pro:restricttobounded}. It is easy to check that $A\subset U(x_0,r)$.
It now follows form \cite[Theorem 5]{Saks} that as a nonempty metric space without isolated points, $A$
contains a dense open subset $G$, for which $\mathcal{D}(G)=\emptyset$.
Now the set $E = A \setminus G$ is clearly nowhere dense in $X$.
  If $\mu\in\mathcal{D}(X)$, it follows by Proposition \ref{pro:restricttobounded} that also $\mu|_A\in\mathcal{D}(A)$.
If it would happen that
$\mu(E) = 0$, this would imply also that $\mu|_G$ is a doubling measure on $G$. Since $\mathcal{D}(G)=\emptyset$, this is impossible. This shows that
 $E$ has to be fat.

To construct
$F$, we may assume that $\diam(X)\le\tfrac14$ and that $X$ has no
isolated points. We consider the
following nested structure on $X$ (see e.g. \cite{KaeSuoRaj} for how this
can be constructed). We let $\{Q_{k,i}\,:\,k\in\N, i\in\{1,\ldots,n_k\}\}$ be Borel
sets such that
\begin{enumerate}[(i)]
\item $X=\bigcup_{i=1}^{n_k}Q_{k,i}$ for every $k\in\N$.
\item $Q_{k,i}\cap Q_{m,j}=\emptyset$ or $Q_{k,i}\subset Q_{m,j}$ when
  $k,m\in\N$, $k\ge m$, $i\in\{1,\ldots,n_k\}$ and $j\in\{1,\ldots,n_m\}$.
\item for every $k\in\N$ and $i\in\{1,\ldots,n_k\}$, there exists a point
  $x_{k,i}\in X$ so that $U(x_{k,i},4^{-k})\subset Q_{k,i}\subset
  B(x_{k,i},4^{-k+2})$.
\end{enumerate}
Given a sequence $0<\alpha_n<1$, we define a Cantor set $F$ using
the following procedure: We first let $F_0=X=Q_{1,1}$. Assume that
$F_n$ has been defined and that it is a pairwise disjoint union
 \[
  F_n=\bigcup_{j}Q^j
 \]
 of elements
 \[
  \{Q^j\}\subset\bigcup_{k\in\N, i\in\{1,\ldots, n_k\}}\{Q_{k,i}\}.
 \]

For each $Q^j$, we choose $\widetilde{Q}^j\in\bigcup_{i}\{Q_{k,i}\}$ such that $\widetilde{Q}^j\subset Q^j$, where
$k=k(Q^j)\in\N$ is chosen such that
\[
4^{-k+3}<\alpha_n\diam(Q^j)\le4^{-k+4}.
\]
Then we define
\[
F_{n+1}=\bigcup_{j}Q^j\setminus\widetilde{Q}^j
\]
and the resulting Cantor set as
\[
F=\bigcap_{n\in\N}\overline{F_n}.
\]

To prove that $F$ is thin, let $\mu$ be a doubling measure on $X$. Consider $F_n=\bigcup_{j}Q^j$ as above. Then it follows from \eqref{eqn:lemH31} that for some $c>0$ and $0<s<\infty$ (depending on $\mu$ but not on $n)$, we have $\mu(\widetilde{Q}^j)\ge c\alpha_{n}^s\mu(Q^j)$ for all $Q^j$ and consequently
\[\mu(F_{n+1})\le(1-c\alpha_{n}^s)\mu(F_n)\]
for all $n$. If $\sum_n \alpha_{n}^s=\infty$, this implies $\mu(F)=0$. Moreover, if we choose $(\alpha_n)\notin\bigcup_{0<p<\infty}\ell^p$, then this holds simultaneously for all $\mu\in\mathcal{D}(X)$ thus implying that $F$ is thin.

The last claim follows from the fact that there are two
doubling measures $\mu$ and $\nu$ on $X$ that are singular with
respect to each other. In the compact case, this was proved in
\cite{KaufmanWu}. For the general, unbounded case, see e.g. \cite{KaeSuoRaj}.
\end{proof}

\begin{rem}
\emph{1.}
The main property of the set $F$ above is that it is \emph{$(c\alpha_n)$-porous} for the defining sequence $(\alpha_n)_n$. It is a known result that this implies the thinnes of $F$
provided $(\alpha_n)\notin\bigcup_{0<p<\infty}\ell^p$. See \cite{Wu}, \cite{Lehto2010}, or \cite{CsoSuom}.

\emph{2.} 
It is maybe worthwhile to compare the notions of being thin or fat
with other concepts of size such as dimension. In
such a comparison, we easily observe that thin and fat sets
cannot be completely characterised in terms of dimension. For
instance, given a complete doubling metric space $X$ and
$\epsilon>0$, there is always a set $E\subset X$ with (Hausdorff
and packing) dimension at most $\epsilon$ which is not thin. See
\cite[Theorem 4.1]{KaeSuoRaj}. On the other hand, there can be sets
with full dimension that are not fat; Consider, for instance, a
Lebesgue null set
$E\subset\R$ of dimension one.
\end{rem}

\section{A sufficient condition for fatness}\label{section:thick}

The aim of this section is to give a sufficient condition for a Cantor
type set being fat, provided that the ``complementary holes'' of the
Cantor set are small enough. A result similar to Theorem
\ref{thm:toinenjuttu} was proved by Staples and Ward
\cite[Theorem 1.4]{StaplesWard} on the real-line and, in fact, the
proof presented here is based on similar iterative use of
\eqref{eqn:tokajuttu} as their proof.
However, the definition of $(\alpha_n)$-thickness we present here is a
relaxed version of the previous definition also on the real-line,
since we do not require the covering sets to be totally disjoint, but
instead assume a uniform bound on the overlaps. It is easy to check
that a set $E\subset\R$ that is $(\alpha_n)$-thick as defined in
\cite{StaplesWard} is
a countable union of
sets that are $(\alpha_n)$-thick in the sense of the
Definition \ref{def:thick} below.

\begin{defin}\label{def:thick}
 We say that a set $E \subset X$ is \emph{$(\alpha_n)$-thick}, for a
 sequence $0<\alpha_n<1$, if there exist constants $N \in \N$, $0< c
 \leq 1$ and finite or countable collections of Borel sets
 $\mathcal{I}_n = \{I_{n,j} \subset X \}$ and $\mathcal{J}_n = \{ J_{n,j} \subset I_{n,j} \}$ for all $n \in \N$ with the properties:
 \begin{enumerate}[(i)]
   \item $E_0=\bigcup_{n\in\N}\bigcup_{I\in\mathcal{I}_n}I$ is bounded.
   \item For all $n \in \N$, each $x \in X$ belongs to at most $N$ different sets $I_{n,j}$.
   \item $c \diam(J_{n,j}) \leq \alpha_n \diam(I_{n,j})$ holds for all $J_{n,j}$.
   \item For all $I_{n,j}$ there exists some $x_{n,j} \in I_{n,j}$ so that
         \begin{equation*}
          B(x_{n,j},\delta_{n,j}) \subset I_{n,j} \setminus \bigcup_{k=1}^{n-1} \bigcup_{i} J_{k,i},
         \end{equation*}
         where $0<\delta_{n,j} = c \diam(I_{n,j})$.
   \item \[
          E_0 \setminus \bigcup_{n=1}^{\infty} \bigcup_j J_{n,j} \subset E.
         \]
 \end{enumerate}
\end{defin}

\begin{thm}\label{thm:toinenjuttu}
  Let $(X,d)$ be a uniformly perfect metric space. If $ (\alpha_n)_{n=1}^\infty \in \ell^0$
  and $E \subset X$ is $(\alpha_n)$-thick, then $E$ is fat.
\end{thm}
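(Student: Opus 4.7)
The plan is to fix an arbitrary $\mu \in \mathcal{D}(X)$ and show $\mu(E) > 0$ by controlling, stage by stage, how much of $E_0$ can be removed. First I would invoke Proposition \ref{pro:restricttobounded} to pass to a bounded closed uniformly perfect superset $A$ of $E_0$, so that $\mu|_A$ is a doubling measure on $A$; this lets us apply \eqref{eqn:tokajuttu} uniformly, with constants $\Lambda, t$ depending only on $\mu$. Set $E^0 = E_0$ and
\[
E^n = E_0 \setminus \bigcup_{k=1}^n \bigcup_j J_{k,j}.
\]
By (v), $E \supset \bigcap_n E^n$, so since $\mu(E_0) < \infty$ (the ambient space is doubling and $E_0$ is bounded) it suffices to prove $\lim_n \mu(E^n) > 0$.

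The core step is the pointwise estimate
\[
\mu(J_{n,j}) \leq K\, \alpha_n^t\, \mu(B(x_{n,j}, \delta_{n,j})),
\]
for a constant $K = K(c, C, \Lambda, t)$ independent of $n, j$. Indeed, fix $y \in J_{n,j}$ and let $d = \diam(I_{n,j})$. Condition (iii) gives $J_{n,j} \subset B(y, \alpha_n d/c)$, and \eqref{eqn:tokajuttu} centered at $y$ with $R = d$ yields $\mu(J_{n,j}) \leq \Lambda(\alpha_n/c)^t \mu(B(y,d))$. Since $x_{n,j} \in I_{n,j}$ we have $B(y,d) \subset B(x_{n,j}, 2d)$, and a bounded number of applications of the doubling inequality bridges the radius $2d$ down to $\delta_{n,j} = c d$, producing the claim. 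Summing over $j$, the bounded-overlap condition (ii) and the fact from (iv) that each $B(x_{n,j}, \delta_{n,j}) \subset E^{n-1}$ give $\sum_j \mu(B(x_{n,j}, \delta_{n,j})) \leq N \mu(E^{n-1})$, whence
\[
\mu(E^n) \geq \mu(E^{n-1}) - \sum_j \mu(J_{n,j}) \geq (1 - KN \alpha_n^t)\,\mu(E^{n-1}).
\]

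Iterating and using $\mu(E_0) \geq \mu(B(x_{1,1}, \delta_{1,1})) > 0$,
\[
\mu(E) \geq \mu(E_0) \prod_{n=1}^\infty (1 - KN \alpha_n^t).
\]
Because $(\alpha_n) \in \ell^0 \subset \ell^t$, the series $\sum_n \alpha_n^t$ converges, so the infinite product is strictly positive, completing the argument. The main obstacle is the core single-step estimate: the naive bound $\mu(J_{n,j}) \lesssim \alpha_n^t \mu(I_{n,j})$ is not enough, because the $I_{n,j}$ may already have been partially depleted by earlier stages, breaking the telescoping. The role of condition (iv) is precisely to furnish, inside every $I_{n,j}$, a protected ball of comparable diameter that survives through stage $n-1$ and whose $\mu$-measure is comparable to $\mu(I_{n,j})$ via doubling — this is what makes the product iteration close and is where uniform perfectness (through \eqref{eqn:tokajuttu}) is essentially used.
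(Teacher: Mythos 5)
Your strategy --- iterate the depletion estimate and close the resulting infinite product using $\sum_n \alpha_n^t < \infty$ --- is essentially the paper's proof, and you correctly identify the roles of conditions (ii)--(iv) of Definition \ref{def:thick}, in particular that (iv) furnishes a protected ball inside each $I_{n,j}$ that survives to stage $n$ and whose measure is doubling-comparable to $\mu(I_{n,j})$. There is, however, a gap in the final step. Convergence of $\sum_n \alpha_n^t$ only guarantees that $1 - KN\alpha_n^t \in (0,1)$ for all sufficiently large $n$; it does not prevent finitely many factors from being zero or negative. If some factor is nonpositive, the telescoping of $\mu(E^n) \ge (1 - KN\alpha_n^t)\,\mu(E^{n-1})$ fails (a single multiplication by a negative factor reverses the accumulated inequality), so the claimed bound $\mu(E) \ge \mu(E_0)\prod_{n\ge 1}(1 - KN\alpha_n^t)$ is not justified, and the product itself may well be $\le 0$. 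The correct adjustment, which the paper makes, is to fix $N_0$ with $KN\alpha_n^t < 1$ for all $n \ge N_0$, run the product only over $n \ge N_0$, and observe that $\mu(E^{N_0-1}) > 0$ because by condition (iv) the set $E^{N_0-1}$ contains a ball $B(x_{N_0,j},\delta_{N_0,j})$ of positive radius. A smaller presentation point: when you invoke \eqref{eqn:tokajuttu} you should take $r = \diam(J_{n,j})$ (which satisfies $r \le R = \diam(I_{n,j})$ automatically, since $J_{n,j}\subset I_{n,j}$) and only afterwards insert the bound $r/R \le \alpha_n/c$; taking $r = \alpha_n d/c$ directly requires $\alpha_n \le c$, which may fail for small $n$. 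Both issues vanish once the iteration is started at a suitably large $N_0$.
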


\begin{proof}

Let $\mu \in \mathcal{D}(X)$.
 For $n\in\N$, denote
 \[
  E_n = E_{n-1} \setminus \bigcup_{k=1}^{n-1} \bigcup_{j} J_{k,j},
 \]
 $r_{n,j} = \diam(J_{n,j})$, $R_{n,j}=\diam(I_{n,j})$ and pick
 $m\in\N$ such that $2^{-m}<c$, where $c$ is from Definition \ref{def:thick}.
Let $N_0<n \in \N$ and pick $y_{n,j} \in J_{n,j}$ for each $J_{n,j}$.
 Now it follows from \eqref{eqn:tokajuttu} that there exist constants $0<t,C_1<\infty$ such that
 \begin{equation*}
  \mu(J_{n,j}) \le C_1 \left(\frac{r_{n,j}}{R_{n,j}}\right)^{t}\mu(B(y_{n,j},R_{n,j}))
 \end{equation*}
 By the doubling property, there is also $1\le C_2<\infty$ such that
   $\mu(B(y_{n,j},R_{n,j})) \le C_2 \mu(B(x_{n,j},R_{n,j}))$.
 We combine the previous two estimates
and recall the definitions of
 $r_{n,j}$, $R_{n,j}$, $\delta_{n,j}$ and $m$ to get
 \begin{align*}
  \mu(J_{n,j})
  \le c^{-t}C_1 C_2  \alpha_{n}^{t} \mu(B(x_{n,j},R_{n,j}))
  & \le c^{-t}C_1 C_2  \alpha_{n}^{t} \mu(B(x_{n,j},2^{m}\delta_{n,j})) \\
  & \le C_3 \alpha_{n}^{t} \mu(B(x_{n,j},\delta_{n,j})),
 \end{align*}
 where $C_3 = c^{-t}C_1 C_2 C^{m}$.
Now
 \begin{equation*}
  \mu\left( \bigcup_{j} J_{n,j} \right) \leq C_3 \alpha_n^t \sum_{j} \mu( B(x_{n,j},\delta_{n,j}) )
  \leq C_3 N \alpha_n^t \mu(E_{n}),
 \end{equation*}
and thus
 \begin{align*}
  \mu(E_{n+1})  = \mu\left(X \setminus \bigcup_{k=1}^{n} \bigcup_{j} J_{k,j} \right)
  & \ge \mu\left(E_{n}\right) - \mu\left( \bigcup_{j} J_{n,j} \right)\\
  & \ge \left( 1- C_3 N \alpha_n^{t} \right) \mu\left( E_{n} \right).
 \end{align*}
 Applying this for $k=N_0,\ldots,n-1$ yields
 \begin{equation*}
  \mu(E_{n}) \geq\prod_{k=N_0}^{n-1} \left( 1 - C_3 N \alpha_n^{t}
  \right)\mu(E_{N_0}),
 \end{equation*}
 for all $n > N_0$.
 Finally, since $\sum_{n=1}^\infty\alpha_n^t < \infty$, there exists $N_0$ such that
 $(1-C_3 N \alpha_n^t) > 0$ for all $n \geq N_0$, and that
 \[
  \prod_{n=N_0}^{\infty} \left( 1 - C_3 N \alpha_{n}^{t} \right) > 0,
 \]
 which implies $\mu(E)>0$.
\end{proof}

\begin{rem}\label{remarks:sec3}
\emph{1.} It is an interesting fact, that if $E\subset
X$ is fat and nowhere dense, then the set $X\setminus E$ cannot carry
nontrivial doubling measures. For if it did, these measures could be
extended as zero to the set $E$ which would then contradict the set
$E$ being fat in $X$.
Thus, from Theorem \ref{thm:toinenjuttu} we can conclude that if $(\alpha_n)_{n=1}^\infty \in \ell^0$ and
$E\subset X$ is nowhere dense and $(\alpha_n)$-thick, then the
set $X \setminus E$ does not support doubling measures. We remark that
Saksman \cite{Saks} used essentially this idea to show that
any nonempty metric space without isolated points has an open dense
subset that carries no doubling measures.

\emph{2.} For sufficiently regular Cantor sets $C\subset\R$, it can be
shown that the condition $(\alpha_n)\in\ell^0$ is also necessary for
the set being fat. See \cite{BuHaMac}, \cite{Hanetal2009},
\cite{CsoSuom}. It is an open question whether such results
can be generalised to higher dimensional Euclidean spaces or to more
general metric spaces. Similar questions can be asked about
$(\alpha_n)$-porous sets and the condition $(\alpha_n)_{n=1}^\infty\notin\bigcup_{0<p<\infty}\ell^p$.
\end{rem}

\section{Proof of Theorem \ref{thm:gradulause}}\label{section:mainthm}

We start the proof of Theorem \ref{thm:gradulause} with a couple of lemmas.
In the case of the Lebesgue measure on $\R^n$,
the measure of an annulus $B(x,r+\epsilon) \setminus B(x,r)$ behaves like
$r^{n-1}\epsilon$ and can thus be pushed small by choosing
$\epsilon>0$ small enough. Although such an estimate does not
generalise directly for doubling measures in uniformly perfect
metric spaces, we still get an estimate that is good enough for our purposes.
This is the part of the proof which requires the balls to be closed.

\begin{lem}\label{le:pullistus}
 Let $X$ be a bounded uniformly perfect metric space
 and let $\mu\in\mathcal{D}(X)$.
 Let further $\left( B_i \right)_{i=1}^\infty$ be a sequence of closed balls in $X$,
 for which
 \[
  (\diam(B_i))_{i=1}^\infty \in \ell^0\quad \text{and}
  \quad \mu\left(X\setminus \bigcup_{i=1}^{\infty} B_i\right) = \epsilon > 0,
 \]
 and which is ordered so that $\diam(B_i) \geq \diam(B_{i+1})$ for all $i\in\N$.
 Then there exist constants $N_0\in\N$ and $0<Q<\infty$ such that
 \begin{equation}
   \mu\left( \bigcup_{i=1}^{N} B_{i}(2N^{-Q}) \right) < \mu(X) - \frac{\epsilon}{2},
   \label{eqn:pullistuslemma}
 \end{equation}
 whenever $N\geq N_0$.
\end{lem}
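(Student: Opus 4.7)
The plan is to reduce the bound to an estimate on annular shells. Observe that $\bigcup_{i=1}^N B_i(2N^{-Q}) = F_N(2N^{-Q})$ with $F_N := \bigcup_{i=1}^N B_i$, that $\mu(F_N) \leq \mu(\bigcup_{i=1}^\infty B_i) = \mu(X) - \epsilon$, and that $F_N(2N^{-Q}) \setminus F_N \subset \bigcup_{i=1}^N (B_i(2N^{-Q}) \setminus B_i)$ by subadditivity. Hence it suffices to find $Q>0$ and $N_0$ such that
\[
 \sum_{i=1}^N \mu(B_i(2N^{-Q}) \setminus B_i) < \frac{\epsilon}{2}
\]
for every $N \geq N_0$. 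I will split this sum into a finite initial segment $i \leq I_0$ (handled by continuity of measure) and a tail $i > I_0$ (handled by Lemma \ref{le:seuraus} combined with the $\ell^0$ decay of the diameters).

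For the head, the closedness of each $B_i$ is essential: since $B_i$ is closed, $B_i(\delta) \setminus B_i$ decreases to $\overline{B_i} \setminus B_i = \emptyset$ as $\delta \downarrow 0$, so $\mu(B_i(\delta) \setminus B_i) \to 0$ by continuity of measure. Fix $I_0$ (to be chosen), pick $\delta_i > 0$ with $\mu(B_i(\delta_i) \setminus B_i) < \epsilon/(4I_0)$ for each $i \leq I_0$, and set $\delta^* := \min_{i \leq I_0} \delta_i > 0$. As soon as $2N^{-Q} \leq \delta^*$ the head contributes less than $\epsilon/4$. For the tail, write $B_i = B(x_i, r_i)$ and note $B_i(2N^{-Q}) \subset B(x_i, r_i + 2N^{-Q})$. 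If $r_i \geq 2N^{-Q}$, then $B(x_i, r_i + 2N^{-Q}) \subset B(x_i, 2r_i)$ and the doubling property gives $\mu(B_i(2N^{-Q}) \setminus B_i) \leq (C-1)\mu(B_i) \leq (C-1)\Lambda r_i^t$; summing yields at most $(C-1)\Lambda \sum_{i > I_0} r_i^t$, which is $<\epsilon/8$ provided $I_0$ is large enough, since $(\diam B_i) \in \ell^0 \subset \ell^t$. If instead $r_i < 2N^{-Q}$, then $B_i(2N^{-Q}) \subset B(x_i, 4N^{-Q})$ and Lemma \ref{le:seuraus} yields $\mu(B_i(2N^{-Q}) \setminus B_i) \leq \Lambda(4N^{-Q})^t$; there are at most $N$ such indices, contributing at most $\Lambda 4^t N^{1-Qt}$, which tends to zero as soon as $Q > 1/t$.

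The step I expect to be the main obstacle, and the one explaining why the balls must be closed, is that for a general doubling measure the annulus $B(x, r+\delta) \setminus B(x, r)$ need not have measure vanishing with $\delta$: doubling alone only bounds it by a constant times $\mu(B(x,r))$. Consequently, the tail estimate via Lemma \ref{le:seuraus} is inherently loose by a constant factor for each large ball, and one must peel off a finite initial segment to exploit the pointwise vanishing supplied by closedness. With $Q > 1/t$ and $N_0$ large enough that $2N_0^{-Q} \leq \delta^*$ and $\Lambda 4^t N_0^{1-Qt} < \epsilon/8$, the three contributions sum to $< \epsilon/4 + \epsilon/8 + \epsilon/8 = \epsilon/2$, and the claim follows.
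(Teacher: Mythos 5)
Your proof is correct and follows essentially the same strategy as the paper: split the $N$ annuli into a finite initial segment (handled via closedness of the balls and continuity of measure from above), a middle regime where $\ell^0$ summability of the diameters together with the upper bound from Lemma~\ref{le:seuraus} controls the sum, and a small-ball regime where the bound $\Lambda(4N^{-Q})^t$ times the count $N$ tends to zero once $Q>1/t$. The only cosmetic differences are the budget $\epsilon/4+\epsilon/8+\epsilon/8$ instead of three $\epsilon/6$'s, phrasing the head reduction in terms of $\sum\mu(B_i(2N^{-Q})\setminus B_i)$, and the implicit use of $r_i\leq D\diam(B_i)$ (from uniform perfectness) to convert $\sum\diam(B_i)^t<\infty$ into $\sum r_i^t<\infty$.
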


\begin{proof}
 Let us choose $0<Q<\infty$ such that
$\Lambda(D+2)^{t} 2^{1-tQ} < \frac{\epsilon}{6}$,
 where $\Lambda$ and $t$
 are from Lemma \ref{le:seuraus}. For any $N\in \N$, let $\Ll(N) = \max\{i:\diam(B_i) \geq N^{-Q} \}$.
 Since for any ball $B_i=B(x_i,r_i)$ we have $r_i \leq \diam(B_i) D$, we get
 \begin{align*}
  &\sum_{i=\Ll(N) +1 }^{N}  \mu(B_i(2 N^{-Q})) \le
  \sum_{i=\Ll(N)+1}^{N} \Lambda \left( r_i+2 N^{-Q} \right)^{t} \\
  & \leq \sum_{i=\Ll(N)+1}^{N} \Lambda \left( \diam(B_i) D+2 N^{-Q} \right)^{t}
    \leq \sum_{i=\Ll(N)+1}^{N} \Lambda \left( (D+2) N^{-Q} \right)^{t} \\
  & \leq
     \Lambda (D+2)^{t} N^{1-tQ} < \frac{\epsilon}{6},
 \end{align*}
 for all $N \geq 2$ by the choice of $Q$. If $N<\Ll(N)+1$ we interpret
 the above sums as zero.

 Now choose $N_1\in\N$ such that
 \begin{align*}
   &\sum_{i=N_1}^{\Ll(N)} \mu(B_i(2 N^{-Q})) \leq \sum_{i=N_0}^{\Ll(N)} \mu(8 B_i) \leq C^3 \sum_{i=N_1}^{\Ll(N)} \mu(B_i) \\
   &\leq C^3\sum_{i=N_1}^{\Ll(N)} \Lambda r_{i}^{t} \leq  C^3 \Lambda D^{t} \sum_{i=N_1}^{\infty} \diam(B_i)^{t} < \frac{\epsilon}{6},
 \end{align*}
 for all $N\in \N$.
 This can be done because $\sum_{i=1}^{\infty} \diam(B_i)^{t} <
 \infty$. If $\Ll(N)<N_1$ we again interpret the sums as zero.

 Finally choose $N_0 > N_1$ such that
 \begin{equation*}
  \mu\left(  \bigcup_{i=1}^{N_1}B_i (2 N_0^{-Q}) \right)
  \leq \mu\left( \bigcup_{i=1}^{N_1} B_i \right) + \frac{\epsilon}{6}.
 \end{equation*}
 Now we compute the measure of the neighborhood in three parts and get
 \begin{align*}
  \mu\left( \bigcup_{i=1}^{N} B_{i}(2 N^{-Q}) \right)&  \le \mu\left( \bigcup_{i=1}^{N_1}B_i(2 N^{-Q}) \right)
 + \sum_{i=N_1}^{\Ll(N)} B_i(2 N^{-Q}) + \sum_{i=\Ll(N)+1}^{N} B_i(2 N^{-Q})\\
 & <  \mu\left( \bigcup_{i=1}^{N_1}B_i \right) + \frac{\epsilon}{6} + \frac{\epsilon}{6} + \frac{\epsilon}{6}
 \le \mu(X) - \frac{\epsilon}{2},
 \end{align*}
   when $N \geq N_0$.
 \end{proof}

Our next lemma gives an estimate on the size of the largest sub-ball
of the $N$:th
approximation $E_N=X\setminus\bigcup_{i=1}^N B_i$ of the cut-out set,
provided that the diameters of the cut-out balls have the required
decay and that the cut-out set is not thin.

\begin{lem}\label{le:suurinpallojaljella}
 Let $X$ be a bounded uniformly perfect metric space and
 $\mu\in\mathcal{D}(X)$. Let further
 $\left( B_i \right)_{i=1}^\infty$ be a sequence of closed balls in $X$ for which
 \[
  (\diam(B_i))_{i=1}^\infty \in \ell^0\quad \text{and}
  \quad \mu\left(X\setminus \bigcup_{i=1}^{\infty} B_i\right) > 0,
 \]
 and which
 are ordered so that $\diam(B_i)\ge \diam(B_{i+1})$ for all $i$. Then
 there exists $0<R<\infty$ and $N_1 \in \N$ such that for all $N\geq
 N_1$ there exists a ball $G_N\subset X \setminus \bigcup_{i=1}^{N}
 B_i$ such that
  $\diam(G_N) \geq N^{-R}$.
\end{lem}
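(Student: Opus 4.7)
The plan is to bootstrap Lemma~\ref{le:pullistus} into the desired diameter estimate. Setting $\epsilon=\mu(X\setminus\bigcup_{i=1}^\infty B_i)>0$, Lemma~\ref{le:pullistus} furnishes constants $Q$ and $N_0$ such that for every $N\ge N_0$,
\[
\mu\left(\bigcup_{i=1}^N B_i(2N^{-Q})\right)<\mu(X)-\tfrac{\epsilon}{2}.
\]
In particular the set $X\setminus\bigcup_{i=1}^N B_i(2N^{-Q})$ is nonempty; pick any point $x_N$ in it. By definition of the open $2N^{-Q}$-neighborhood we have $d(x_N,B_i)\ge 2N^{-Q}$ for every $i\le N$, and so any point $y\in B(x_N,N^{-Q})$ satisfies $d(y,B_i)\ge d(x_N,B_i)-d(x_N,y)\ge N^{-Q}>0$. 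Hence the closed ball $G_N:=B(x_N,N^{-Q})$ lies entirely in $X\setminus\bigcup_{i=1}^N B_i$.

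It remains to bound $\diam(G_N)$ from below, which is where uniform perfectness is used. Since $X$ is bounded and $N^{-Q}\to 0$, for all sufficiently large $N$ we have $N^{-Q}<\tfrac12\diam(X)$, which forces $X\setminus B(x_N,N^{-Q})\neq\emptyset$ (any pair $a,b$ realizing $d(a,b)>2N^{-Q}$ places at least one of $a,b$ outside $B(x_N,N^{-Q})$). The uniform perfectness constant $D$ then produces a point of $B(x_N,N^{-Q})\setminus B(x_N,N^{-Q}/D)$, yielding $\diam(G_N)\ge N^{-Q}/D$. Choosing $R>Q$, for instance $R=Q+1$, and taking $N_1\ge N_0$ large enough that simultaneously $N_1^{-Q}<\tfrac12\diam(X)$ and $N_1\ge D$, we obtain $\diam(G_N)\ge N^{-Q}/D\ge N^{-R}$ for all $N\ge N_1$.

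The main (and essentially only) obstacle is recognizing that Lemma~\ref{le:pullistus} does virtually all of the work: it already packages together the closedness of the $B_i$, the doubling property, and the polynomial decay of the diameters to produce a point uniformly far from the first $N$ cut-out balls. The remaining task is the bookkeeping above, translating a point outside the open inflations into a closed ball of radius $N^{-Q}$ inside the complement, and then converting this radius into a diameter via the quantitative form $\diam(B(x,r))\ge r/D$ of uniform perfectness. Because $Q$ is already determined by $\mu$, the resulting exponent $R$ also depends on $\mu$, which is allowed by the statement.
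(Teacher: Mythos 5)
Your proof is correct, and it takes a cleaner route than the paper's. The paper constructs a maximal packing $\mathcal{B}_N$ of balls of radius $\tfrac{1}{2}N^{-Q}$, turns it into a disjoint cover $\mathcal{P}_N$ of $X$, and then uses the measure inequality from Lemma~\ref{le:pullistus} together with countable additivity over the disjoint pieces to extract one $P\in\mathcal{P}_N$ that avoids $\bigcup_{i=1}^N B_i$; the ball $G_N$ is then read off from the packing ball contained in that $P$. You bypass the packing/cover machinery entirely: the measure inequality already implies that $X\setminus\bigcup_{i=1}^N B_i(2N^{-Q})$ is nonempty, and a single triangle-inequality computation shows that the closed ball of radius $N^{-Q}$ about any point of that set stays clear of $\bigcup_{i=1}^N B_i$. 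From there both arguments finish the same way, using uniform perfectness in the form $\diam B(x,r)\ge r/D$ (valid once $r$ is small relative to $\diam X$) and then absorbing the constant $1/D$ by choosing $R>Q$ and $N_1$ large. Your version is shorter and arguably more transparent; it also makes explicit that only nonemptiness of the complement of the inflated balls is needed, not any decomposition of $X$. Both proofs produce the same dependence of $R$ on $\mu$ (through $Q$), which, as you note, is permitted by the statement.
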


\begin{proof}
 Let us first choose $Q$ and $N_0$ as in Lemma \ref{le:pullistus}.
 For any $N \geq N_0$, let $\mathcal{B}_N$ be a maximal packing of open
 balls of radius $\frac{1}{2 N^Q}$ in the whole of $X$. Since
 $X \subset \bigcup_{\mathcal{B}_N} 2 B,$
 we find a disjoint cover $\mathcal{P}_N=\{P_1,P_2,\ldots\}$ of the space $X$, such that
 for any $P \in \mathcal{P}_N$, there exists $B \in \mathcal{B}_N$
 such that $B \subset P \subset 2 B$. Indeed, we can simply define
 $P_1=2B_1\setminus\bigcup_{i\ge2}B_i$ and then inductively
 \[
 P_n=2B_n\setminus\left(\bigcup_{j=1}^{n-1}P_j\cup\bigcup_{i>n}B_i\right),
 \]
 for $n\ge 2$.

 Denote $\epsilon = \mu\left(X\setminus \bigcup_{i=1}^{\infty} B_i\right)$. According to Lemma \ref{le:pullistus}, we have
 \begin{equation*}
  \sum_{\substack{P \in \mathcal{P}_N\\P \cap \bigcup_{i=1}^N B_i \neq \emptyset }}  \mu(P)
  \leq \mu\left( \bigcup_{i=1}^N B_i(2 N^{-Q})\right)
  < \mu(X) - \frac{\epsilon}{2}
  = \sum_{P \in \mathcal{P}_N} \mu(P)-\frac{\epsilon}{2},
 \end{equation*}
 and so there exist $P\in\mathcal{P}_N$ and $B(x,1/(2N^Q))$ with
 \[
  B\left(x,1/(2 N^Q)\right) \subset P \subset X \setminus \bigcup_{i=1}^{N} B_{i}.
 \]
 For this ball
 $\diam(B) \geq  \frac{1}{2 D N^Q} $. By choosing $R>Q$, we find $N_1 \ge N_0$ such that
 \begin{equation*}
  \diam(G_N) \geq 1/(2 D N^Q) > N^{-R},
 \end{equation*}
 for all $N \ge N_1$.
\end{proof}

We also employ an easy algebraic lemma:

\begin{lem}\label{le:vikalemma}
  For each $\epsilon>0$ and $\delta > \gamma + 1$, where $\gamma > 0$, there exists $M \in \N$ such that
  \begin{equation*}
    \sum_{m=N}^{\infty} \frac{1}{m^{\delta}} < \frac{\epsilon}{N^{\gamma}},
  \end{equation*}
 for all $N \geq M$.
\end{lem}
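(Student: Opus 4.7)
The plan is to reduce this to a standard integral comparison and then exploit the strict inequality $\delta-1>\gamma$.

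First I would use the fact that $x\mapsto x^{-\delta}$ is decreasing on $(0,\infty)$ (recall $\delta>\gamma+1>1$) to bound the tail sum by an integral:
\[
\sum_{m=N}^{\infty}\frac{1}{m^{\delta}}\le\frac{1}{N^{\delta}}+\int_{N}^{\infty}\frac{dx}{x^{\delta}}=\frac{1}{N^{\delta}}+\frac{1}{(\delta-1)N^{\delta-1}}.
\]
Since $\delta>1$, both terms on the right-hand side are dominated (up to a constant depending only on $\delta$) by $N^{-(\delta-1)}$. So there is a constant $K=K(\delta)$ with
\[
\sum_{m=N}^{\infty}\frac{1}{m^{\delta}}\le\frac{K}{N^{\delta-1}}
\]
for all $N\ge 1$.

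Next I would write $N^{-(\delta-1)}=N^{-\gamma}\cdot N^{-(\delta-1-\gamma)}$ and use the hypothesis $\delta>\gamma+1$, which gives $\delta-1-\gamma>0$. Thus $N^{-(\delta-1-\gamma)}\to 0$ as $N\to\infty$, and we can choose $M\in\N$ so large that $K\,N^{-(\delta-1-\gamma)}<\epsilon$ for every $N\ge M$. Combining this with the previous display yields
\[
\sum_{m=N}^{\infty}\frac{1}{m^{\delta}}\le\frac{K}{N^{\delta-1-\gamma}}\cdot\frac{1}{N^{\gamma}}<\frac{\epsilon}{N^{\gamma}}
\]
for all $N\ge M$, which is the claim. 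There is no real obstacle here; the only thing to keep track of is that the exponent $\delta-1$ produced by the integral test is genuinely larger than $\gamma$, which is exactly the quantitative content of the hypothesis $\delta>\gamma+1$.
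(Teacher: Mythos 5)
Your argument is correct and is essentially the same as the paper's: both bound the tail sum by an integral of $x^{-\delta}$ and then use $\delta-1>\gamma$ to conclude the quantity $N^{\gamma}\sum_{m\ge N}m^{-\delta}$ tends to zero. The only cosmetic difference is that the paper integrates from $N-1$ to absorb the first term directly, whereas you split off $N^{-\delta}$ and integrate from $N$.
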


\begin{proof}
 It is enough to observe that
 \[
  N^\gamma\sum_{m=N}^{\infty} \frac{1}{m^{\delta}} < N^\gamma\int_{N-1}^\infty \frac{1}{x^\delta}dx
   = \frac{N^\gamma}{\delta-1}(N-1)^{1-\delta} \longrightarrow 0
 \]
 as $N \longrightarrow \infty$.
\end{proof}

Now we are ready to prove Theorem \ref{thm:gradulause}.

\begin{proof}[Proof of Theorem \ref{thm:gradulause}]
 Assume that $E$ is not thin so that $\mu(E)>0$ for some $\mu \in
 \mathcal{D}(X)$. Let
 $\nu\in\mathcal{D}(X)$. We have to show that also $\nu(E)>0$.

We first reduce the problem to the case when $X$ is bounded. Pick
$x_0\in X$ and choose $T>0$ so large that $\mu(B(x_0,T)\cap E)>0$ and $\diam(B_i)<T$ for all
$i\in\N$. Moreover, let $A\supset B(x_0,2T)$ be as in Proposition
\ref{pro:restricttobounded}. For each $B_i\in\{B_i\}_{i=1}^\infty$ for
which
\begin{equation}\label{eq:reunapallo}
B_i\cap
A\neq\emptyset\neq B_i\setminus A,
\end{equation}
we pick $y_i\in B_i\cap A$ and
define $\widetilde{B}_i=B(y_i,\diam(B_i))\cap A$. Let $(B'_i)_{i=1}^\infty$
consist of all the original cut-out balls that are completely inside
$A$ and of the balls $\widetilde{B}_i$ for those $B_i$ that satisfy
\eqref{eq:reunapallo}. Now, if we replace $X$ by $A$, $\mu$ by
$\mu|_A$, and $(B_i)_i$ by $(B'_i)_i$ we have reduced the
problem to the case when $X$ is bounded. Observe that
$A\setminus\bigcup_{i}B'_i\subset E$,
$\mu(A\setminus\bigcup_{i}B'_i)>0$ by the choice of $T$, and
that $(\diam(B'_i))_i\in\ell^0$ since
$\diam(\widetilde{B}_i)\leq 2\diam(B_i)$ for each $i \in \N$.

 From now on, we assume that $X$ is bounded. By reordering, we may
 also assume that the balls
   $B_i$ satisfy $\diam(B_i) \ge \diam(B_{i+1})$ for all $i \in \N$.
 Because $\mu(E)>0$, we can apply Lemma \ref{le:suurinpallojaljella}.
 Let $N_1$ and $R$ be as in Lemma \ref{le:suurinpallojaljella} and take
 $N>N_1$. Let $G_N$ be the ball given by Lemma
 \ref{le:suurinpallojaljella} for which $G_N\subset
 X\setminus\bigcup_{i=1}^NB_i$ and $\diam(G_N)\ge N^{-R}$. Now we estimate
 \begin{equation}\label{eqn:lopunalku}
  \nu(E) \ge \nu(G_N \cap E) = \nu\left(G_N \setminus \bigcup_{m=N+1}^{\infty}B_m\right)
  = \nu(G_{N}) - \nu\left(\bigcup_{m=N+1}^{\infty}B_m \cap G_N\right).
 \end{equation}
 Using Lemmas \ref{le:seuraus} and \ref{le:suurinpallojaljella}, we find constants $0<C_1,s<\infty$ such that
 \begin{eqnarray}\label{eqn:Gnkoko}
  \nu(G_N) \geq C_1 \diam(G_N)^{s}   \geq C_1 N^{-Rs}.
 \end{eqnarray}
 Also, by Lemma \ref{le:seuraus} we have constants $0<t,C_2<\infty$,
 such that, for all $p>0$,
 \begin{equation}\label{eqn:Gnleikkauskoko}
\begin{split}
  \nu\left(\bigcup_{m=N+1}^{\infty}B_m \cap G_N\right) \leq \sum_{m=N+1}^{\infty} \nu(B_m \cap G_N)
\leq \sum_{m=N+1}^{\infty} C_2\diam(B_m)^{t}\\
   \leq C_2 \sum_{m=N}^{\infty} \left(\frac{1}{m}\sum_{k=1}^m
    \diam(B_k)^p \right)^{t/p} \leq C_2 \sum_{m=N}^{\infty} \left(\frac{c_p}{m}\right)^{t/p},
\end{split}
\end{equation}
 where $c_p = \sum_{k=1}^\infty \diam(B_k)^p$.

 Finally, let us choose $p$ such that $0 < p < \frac{t}{(Rs +1)}$
 and use Lemma \ref{le:vikalemma} with
 $\epsilon =  C_1 c_p^{-t/p}C_2^{-1}$, $\delta = \frac{t}{p}$ and $\gamma = Rs$.
 We thus find $M$ such that for any $N > \max\{N_0,M\}$, according to Lemma
 \ref{le:vikalemma}, \eqref{eqn:lopunalku}, \eqref{eqn:Gnleikkauskoko} and \eqref{eqn:Gnkoko}, we have
 \begin{equation*}
  \nu(E) \geq \frac{C_1}{N^{Rs}} - c_p^{t/p} C_2
  \sum_{m=N}^{\infty} \left( \frac{1}{m} \right)^{t/p} >0.
 \end{equation*}
 Since $\nu\in\mathcal{D}(X)$ was arbitrary, we conclude that $E$ is fat.
\end{proof}

\section{Examples and open problems}\label{section:examples}

The first example illustrates the conclusion of Theorem
\ref{thm:gradulause} in the simplest case when $X$ is an interval on
the real-line.

\begin{exa}
 Let $(\alpha_n)_{n=1}^\infty\in\ell^0$ and $X = [0,T]$ with the Euclidean
 distance, where $T=\sum_{n=1}^\infty\alpha_n$. Let
 $I_1,I_2,\ldots\subset X$ be closed intervals with length $|I_n|=\alpha_n$.
Now, simply by looking at the Lebesgue measure of
$E=X\setminus\bigcup_{n=1}^\infty I_n$,
Theorem \ref{thm:gradulause} implies that the set $E$ is thin if and only if the interiors
 of the intervals $I_i$ are pairwise disjoint. Observe that also in
 this case, $E$ is often a Cantor type set.
\end{exa}

The next example shows why the balls in Theorem \ref{thm:gradulause} have to be closed.
See \cite[Example 6.3]{Rajala} for another use of a similar construction.

\begin{exa}\label{ex:openfails}
 Take $(\beta_n)_{n=1}^\infty \in \ell^s \setminus \ell^r$ for some $0<r<s<1$,
 and consider the middle interval Cantor set
 $\mathcal{C}(\beta_n)\subset[0,1]$ induced by the sequence
 $(\beta_n)_{n=1}^\infty$ and defined as
 \[
  \mathcal{C}(\beta_n) = \bigcap_{k \in \N} \bigcup_{i=1}^{2^k} I_{k,i},
 \]
 where $I_{1,1}=[0,1]$ and the intervals $I_{k+1,i}$ are achieved by removing
 an open interval of length $\beta_k |I_{k,i}|$ from the middle of each interval $I_{k,i}$.
As proved by Buckley, Hanson, and MacManus
 \cite[Theorem 0.4]{BuHaMac}, the choice $(\beta_n)_{n=1}^\infty \in \ell^s \setminus
 \ell^r$ implies that $\mathcal{C}(\beta_n)$ is neither thin nor fat
 in $[0,1]$.

 Next we define a mapping
 \[
  f \colon [0,1] \to \R^2 \colon x \mapsto \left(x,\frac{d_E(x,\mathcal{C}(\beta_n))}{2 d_E(1/2,\mathcal{C}(\beta_n))}\right),
 \]
 where $d_E$ is the standard Euclidean metric in $\R$. Let $X$ be the graph of $f$
 and equip it with the maximum metric:
\[d\left((x,f(x)),(y,f(y)\right)=\max\{|x-y|,|f(x)-f(y)|\}.\]
 Now measures on $[0,1]$ can be mapped onto measures on $X$ under
 $x\mapsto (x,f(x))$ and it is easy to check that doubling
 measures of $[0,1]$ are transformed onto doubling measures of $X$.
 Thus, we conclude that the set
 \[
  \partial B\left( (1/2,1/2), 1/2 \right) = \{(x,0):x \in
  \mathcal{C}(\beta_n) \} \subset X
 \]
 is neither thin nor fat in $X$.
 Finally, this set can be realized as a cut-out set corresponding
 to the sequence $(\alpha_n)=(2^{-n})$ with open cut-out balls
$U_i = U\left( (1/2,1/2), 2^{-i} \right)$. Indeed,
$X \setminus \bigcup_{i=1}^\infty U_i = X\setminus U_1=\partial B\left( (1/2,1/2),1/2
\right)$ and we observe that Theorem \ref{thm:gradulause} does not
hold if open balls are used in place of closed balls.
\end{exa}

\begin{rem}
We remark that in concrete situations Theorem \ref{thm:toinenjuttu} is sometimes more useful than
Theorem \ref{thm:gradulause}. For instance, if
$\mathcal{C}(\beta_n)$ is as in Example \ref{ex:openfails}, we see that the
assumptions of Theorem \ref{thm:toinenjuttu} are satisfied precisely
when $(\beta_n)_{n=1}^\infty\in\ell^0$ whereas for the natural cut-out balls
(i.e. the complementary intervals of the set $C(\beta_n)$) the
assumptions of Theorem \ref{thm:gradulause} are not satisfied even for
$\beta_n=2^{-n}$. See \cite[Example 2.2]{StaplesWard}. However, the
strength of Theorem \ref{thm:gradulause} (compared to Theorem
\ref{thm:toinenjuttu}, for instance) is that there are no geometric
assumptions on the location of the cut-out balls. As the above example
indicates, in general the assumption $(\beta_n)_{n=1}^\infty\in\ell^0$ seems to be
too strong and thus it seems reasonable to pose the following question:
\end{rem}

\begin{question}
Given a doubling uniformly perfect metric space $X$, does there exist
$p=p(X)>0$ such
that the conclusion of Theorem \ref{thm:gradulause} holds for cut-out
sets for which $(\diam(B_i))\in\ell^p$. In particular, does any value
$0<p<1$ suite as $p(\R)$ or $p([0,1])$?
\end{question}

The following example shows that necessarily $p([0,1])<1$.

\begin{exa}
Let $\mu$ be any doubling measure on $[0,1]$ which is singular with
respect to the Lebesgue measure. Then we can find pairwise disjoint closed
intervals $I_1,I_2,\ldots\subset[0,1]$ such that for $\alpha_n=|I_n|$,
we have $\sum_{n=1}^\infty\alpha_n=1$ but $\mu([0,1]\setminus\bigcup_{n=1}^\infty I_n)>0$.
\end{exa}

 Our final example shows that the assumption on uniform perfectness in
 our results is really needed.

\begin{exa}\label{ex:nuperf}
We construct a Cantor set $\mathcal{C}(\beta_n)\subset\R$ (which is
not uniformly perfect) as in Example \ref{ex:openfails}
and show that
the conclusion of Theorem \ref{thm:gradulause} does not hold for
$X=\mathcal{C}(\beta_n)$ equipped with the Euclidean metric.
The numbers $\frac{1}{3} < \beta_n<1$ will be
determined later. In fact, it would be possible to choose
$(\beta_n)_n$ such that any compact $E\subset\mathcal{C}(\beta_n)$ is
a cut-out set for some $(B_i)_i$ with
$(\diam(B_i))_{i=1}^\infty\in\ell^0$ and then use the general
existence result of Proposition \ref{prop:existence}. For the sake of
concreteness, we give a more detailed example below.

 Let $m_j=\lfloor\log_2(j+1)\rfloor$, where $\lfloor\cdot\rfloor$
 denotes integer part. Define $k_1=1$ and
 $k_{j+1}=k_j+m_j$, for $j\in\N$. We construct a set $E
 \subset X$ inductively, using the sequences $(k_j)_{j=1}^\infty$ and $(m_j)_{j=1}^\infty$ as
 follows:
We first
 remove from $\mathcal{C}(\beta_n)$ the left construction interval of level
 level $k_2=2$ (i.e. $[0,\tfrac12(1-\beta_1)]$).
 At step $j>1$, we have removed from $\mathcal{C}(\beta_n)$ some of the
 level $k_j$ construction intervals. From each of the remaining level
 $k_j$ intervals, we then remove the left-most level $k_{j+1}$ interval.
 We let $E\subset\mathcal{C}(\beta_n)$ be the remaining set.

 For each $0<p<1$, consider the binomial measure $\mu_p$ on $\mathcal{C}(\beta_n)$ that
 is obtained by assigning the weight $p$ to the left interval and $1-p$ to the right
interval on each step in the construction of $\mathcal{C}(\beta_n)$. Now the $\mu_p$ measure of $E$ is
 \[
  \mu_p(E) = \prod_{i=0}^\infty(1-p^{m_i})
 \]
 which is zero if and only if
 \[
  \sum_{i=0}^\infty p^{m_i} = \sum_{i=0}^\infty (j+1)^{1/\log_p(2)}=  \infty
 \]
 which is the case if and only if $p \ge \frac{1}{2}$. Thus, for example, $\mu_{1/3}(E)>0$ and $\mu_{2/3}(E) = 0$.
 Next we observe that since $\beta_n>\frac{1}{3}$ for all $n$,
 we have $\mu_p\in\mathcal{D}(X)$ for all $0<p<1$. We thus conclude that
 the set $E$ is neither fat nor thin in $\mathcal{C}(\beta_n)$.

 Let $(I_n)_{n\in\N}$ be the construction intervals removed from
 $\mathcal{C}(\beta_n)$. Since $\beta_n>\tfrac13$, the sets
 $B_n=I_n\cap\mathcal{C}(\beta_n)$ are balls as subsets of
 $\mathcal{C}(\beta_n)$. Moreover, since we are free to choose each $\beta_n$ as close to
 $1$ as we wish, we can easily guarantee that $(\diam(B_i))_{i=1}^\infty\in\ell^0$.
 Notice that by Theorem \ref{thm:gradulause}, we
 know that
 the resulting Cantor set $\mathcal{C}(\beta_n)$ is not uniformly perfect, i.e. that $\limsup_{n\rightarrow\infty}\beta_n=1$.

The set $E\subset\mathcal{C}(\beta_n)$ constructed above is clearly $(1-\beta_n)_n$--thick (Also any singleton in $\mathcal{C}(\beta_n)$
  is $(1-\beta_n)$--thick) and
thus choosing $(1-\beta_n)_n\in\ell^0$ implies that the assumption on
uniform perfectness is needed also in Theorem \ref{thm:toinenjuttu}.
\end{exa}

\subsection*{Acknowledgements}
T. Ojala was supported by the Vilho, Yrj\"o and Kalle V\"ais\"al\"a fund,
T. Rajala by the European Project ERC AdG *GeMeThNES* and
V. Suomala by the Academy of Finland project \#126976.

\bibliographystyle{alpha}

\bibliography{thinfat}

\end{document}